\numberwithin{equation}{section}
\def\lhd{\triangleleft}
\def\ra{\rightarrow}
\def\iff{\Longleftrightarrow}
\def\emb{\hookrightarrow}
\def\A{{\mathcal A}}
\def\B{{\mathcal B}}
\def\C{{\mathcal C}}
\def\dom{{\mathbf d}}
\def\frakL{{\mathfrak L}}
\def\L{{\mathcal L}}
\def\P{{\mathcal P}}
\def\relbeta{\, \beta \,}
\def\lam{\lambda}
\def\colim{\operatorname{colim}}
\def\id{\operatorname{id}}
\def\homogpd{\textbf{OGpd}}
\newcommand{\Mod}[1]{\operatorname{Mod}_{#1}}
\def\dom{\mathbf{d}}
\def\ran{\mathbf{r}}
\newcommand{\down}[1]{(#1)^{\downarrow}}
\def\leq{\leqslant}
\def\geq{\geqslant}
\def\<{\langle}
\def\>{\rangle}
\def\ol{\overline}
\newtheorem{theorem}{Theorem}[section]
\newtheorem{prop}[theorem]{Proposition}
\newtheorem{lemma}[theorem]{Lemma}
\newtheorem{cor}[theorem]{Corollary}
\theoremstyle{definition}
\newtheorem{definition}[theorem]{Definition}
\title{The homology of principally directed ordered groupoids}
\author{B. O. Bainson and N. D. Gilbert}
\address{
School of Mathematical and Computer Sciences\\
and the Maxwell Institute for the Mathematical Sciences,\\
Heriot-Watt University, Edinburgh EH14 4AS, U.K.}
\email{bob30@hw.ac.uk, N.D.Gilbert@hw.ac.uk}
\date{}
\thanks{Some of these results are presented in a different form as part of the first author's PhD thesis \cite{bob}.  
The support of a MACS Global Platform Studentship from Heriot-Watt University is gratefully acknowledged.}
\keywords{groupoid, homology, colimit}
\begin{document}

\begin{abstract} 
We present some homological properties of a relation $\beta$ on ordered groupoids that generalises the minimum group
congruence for inverse semigroups.   When $\beta$ in a transitive relation on an ordered groupoid $G$, the
quotient $G / \beta$ is again an ordered groupoid, and construct a pair of adjoint functors between the module categories of
$G$ and of $G / \beta$.  As a consequence, we show that the homology of $G$ is completely determined by that of $G / \beta$,
generalising a result of Loganathan for inverse semigroups.
\end{abstract}

\subjclass[2010]{Primary 20L05 ; Secondary 20J05, 18G60}

\maketitle

\thispagestyle{empty}

\section{Introduction}
This paper studies some homological properties of a quotient construction for ordered groupoids determined by a certain
relation $\beta$ that generalises the minimal group congruence $\sigma$ on an inverse semigroup.  
Modules for inverse semigroups, and the cohomology of an inverse semigroup, were first defined by Lausch in \cite{Lau},
and the cohomology used to classify extensions.   An approach based on the cohomology of categories was
then given by Loganathan \cite{Log}, who showed that Lausch's cohomology of an inverse semigroup $S$ was equal to
the cohomology of a left-cancellative category $\frakL(S)$ naturally associated to $S$.  Loganathan proves a number of results
relating the cohomology of $S$ with that of its semilattice of idempotents $E(S)$ and of its maximum group image
$S/ \sigma$.  He also considers the homology of $S$, but the treatment is brief since \cite[Proposition 3.5]{Log} shows that
the homology of $S$ is completely determined by the homology of the group $S / \sigma$.

Ordered groupoids and inverse semigroups are closely related, since any inverse semigroup can be considered as
a particular kind of ordered groupoid -- an \textit{inductive} groupoid -- and this correspondence in fact gives rise to an isomorphism
between the category of inverse semigroups and the category of inductive groupoids.  This is the Ehresmann-Schein-Nambooripad 
Theorem (see \cite[Theorem 4.1.8]{LwBook}).  This close relationship has been exploited in the use of ordered
groupoid techniques to prove results about inverse semigroups (see \cite{HMM, LwBook, LMP, St}) and has been the motivation behind 
various generalisations of results about inverse semigroups to the wider class of ordered groupoids (see \cite{AGM,Gi3,Lw1}).

In this paper we revisit Loganathan's results on the homology of inverse semigroups, and we are led to consider the relation 
$\beta$ on an ordered groupoid $G$ defined as follows: two elements of $G$ are $\beta$--related if and only if they have a
lower bound in $G$.  This relation is trivially reflexive and symmetric but need not be transitive: when it is, we say that
$G$ is a principally directed ordered groupoid, a choice of terminology justified in Lemma \ref{princ_dir} below. 
The $\beta$--relation and the class of principally directed ordered groupoids
featured in \cite{Gi3} (but there called  $\beta$--transitive ordered groupoids), in the study of the structure of inverse semigroups $S$ with zero.  In this setting, $S^* = S \setminus \{ 0 \}$
can be considered as an ordered groupoid, and $S^*$ is then principally directed if and only if $S$ is \textit{categorical at zero}: that
is, whenever $a,b,c \in S$ and $abc=0$ then either $ab=0$ or $bc=0$.  The structure theorem of Gomes and Howie 
\cite{GomHo} for
strongly categorical inverse semigroups with zero can then be deduced from a more general result on principally directed
ordered groupoids \cite[section 4.1]{Gi3}.  In this paper, the significance of the transitivity of $\beta$ is that it permits the construction
of a pair of adjoint functors between the module categories of $G$ and of $G / \beta$.  The left adjoint is simply the colimit
over $E(G)$.  The right adjoint expands a $G / \beta$--module to a $G$--module.  These constructions are discussed in  section
\ref{inf_and_colim}, and generalise the key ingredients of Loganathan's treatment of the homology of inverse semigroups
in \cite{Log}.  The fact that the homology of a principally directed ordered groupoid $G$ is determined by the homology of the
quotient $G / \beta$ then follows readily in section \ref{homology_of_og}.

\section{Ordered Groupoids}
\label{ord_gpd}
A groupoid $G$ is a small category in which every morphism is invertible. The set of identities of $G$ is denoted $E(G)$, following the customary notation for the set of idempotents in an  inverse semigroup. We write $g \in G(e,f)$ when $g$ is a morphism starting at $e$ and ending at $f$.  We regard a groupoid as an algebraic structure comprising its morphisms, and compositions of morphisms as a partially defined binary operation (see \cite{HiBook}, \cite{LwBook}). The identities are then written as $e=g\dom =gg^{-1}$ and $f=g\ran=g^{-1}g$ respectively. A groupoid map $\theta:G\ra H$ is just a functor.

\begin{definition}
An ordered groupoid is a pair $(G, \leq)$ where $G$ is a groupoid and $\leq$ is a partial order defined on $G$, satisfying the following axioms:
\begin{enumerate}
\item[OG1]  $x \leq y  \Rightarrow x^{-1} \leq y^{-1}$, for all $x, y \in G$.
\item[OG2] Let $x, y, u, v\in G$ such that $x\leq y$ and $u\leq v$. Then $xu\leq yv$ whenever the compositions $xu$ and $yv$ exist.
\item[OG3] Suppose $x\in G$ and $e\in E(G)$ such that $e\leq x\mathbf{d}$, then there is a unique element $(e|x)$ called the \textit{restriction} of $x$ to $e$ such that $(e|x)\mathbf{d}= e$ and $(e|x)\leq x$.
\item[OG4] If $x\in G$ and $e \in E(G)$ such that $e \leq x \mathbf{r}$, then there exist a unique element $(x|e)$ called the \textit{corestriction} of $x$ to $e$ such that $(x|e) \mathbf{r}= e$ and $(x|e) \leq x$. 
\end{enumerate} 
It is easy to see that OG3 and OG4 are equivalent: if OG3 holds then we may define a corestriction $(x|e)$ by $(x|e) = (e|x^{-1})^{-1}$.
\end{definition}
An ordered functor $\phi: G \ra H$ of ordered groupoids is an order preserving groupoid--map, that is $g\phi \leq h\phi$ if $g\leq h$. Ordered groupoids together with ordered functors constitute the category of ordered groupoids, $\homogpd{}$.  

Suppose $g,h \in G$ and that the greatest lower bound $\ell$ of $g\textbf{r}$ and $h\textbf{d}$ exist, then we define the {\it pseudoproduct} of $g$ and $h$ by $g\ast h= (g|\ell)(\ell|h)$.  An ordered groupoid is called \textit{inductive} if the pair $(E(G),\leq)$ is a meet semilattice. In an inductive groupoid $G$, the pseudoproduct is everywhere defined and $(G,\ast)$ is then an inverse semigroup:
see \cite[Theorem 4.1.8]{LwBook}

To any ordered groupoid $G$ we associate a category$\frakL(G)$ as follows. The objects of $\frakL(G)$ are the identities of $G$ and morphisms are given by pairs $(e,g) \in E(G) \times G$ where $g\dom \leq e$, with $(e,g)\dom=e$ and $(e,g)\ran=g\ran $. The composition of morphisms is defined by the partial product $(e,g)(f,h)= (e, g\ast h)=(e,(g|h \dom)h)$ whenever $g\ran=f$. It is easy to see that $\frakL(G)$ is left cancellative.  
This construction originates in the work of Loganathan \cite{Log}, and forms the basis of the treatment in \cite{Log} of the cohomology of inverse semigroups.

\section{Principally directed ordered groupoids}
Let $G$ be an ordered groupoid. The relation $\beta$ on $G$ is defined by
\[ g \relbeta h \; \iff \; \text{there exists} \; k \in G \; \text{with} \; k \leq g \; \text{and} \; k \leq h \,.\]
$\beta$ is evidently reflexive and symmetric but need not be transitive: we shall be concnerned with the class of ordered groupoids 
for which  $\beta$ is
indeed transitive, and thus an equivalence relation.  We shall denote the  $\beta$--class of $g \in G$ by
$g \beta$.  A \textit{principal order ideal} is a subset of $G$ of the form $\{ g \in G : g \leq t \}$ for some $t \in G$, and will
be denoted by $\down{t}$.

\begin{lemma}{\cite[section 2.2]{Gi3}}
\label{princ_dir}
The $\beta$--relation on an ordered groupoid $G$ is transitive if and only if every principal order ideal in $G$ is a directed set.
\end{lemma}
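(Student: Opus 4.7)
The plan is to unwind the two definitions and observe that they essentially say the same thing, taking care to identify what "directed" means here. Since $\down{t}$ has $t$ as a maximum element, it is trivially upward directed, so the lemma must be using "directed" in the downward sense: every pair $g,h \in \down{t}$ has a common lower bound inside $\down{t}$. With this reading, a direct translation in each direction should suffice.

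For the forward direction, I would assume $\beta$ is transitive and pick $t \in G$ together with two elements $g, h \in \down{t}$. Since $g \leq t$ and $h \leq t$, the element $g$ itself witnesses $g \relbeta t$ and the element $h$ itself witnesses $t \relbeta h$. Transitivity then gives $g \relbeta h$, so some $k \in G$ satisfies $k \leq g$ and $k \leq h$. Transitivity of the partial order gives $k \leq t$, placing $k$ in $\down{t}$, which establishes directedness.

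For the reverse direction, I would assume every $\down{t}$ is (downward) directed and take $g \relbeta h$ and $h \relbeta f$. Unpacking the $\beta$-relation produces witnesses $k_1 \leq g$, $k_1 \leq h$ and $k_2 \leq h$, $k_2 \leq f$. Both $k_1$ and $k_2$ then lie in $\down{h}$, so by hypothesis there exists $k \in \down{h}$ with $k \leq k_1$ and $k \leq k_2$. Chaining inequalities gives $k \leq g$ and $k \leq f$, whence $g \relbeta f$.

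There is no real obstacle: the argument is a bookkeeping exercise that uses only the definitions of $\beta$ and of a principal order ideal together with transitivity of $\leq$. The only judgement call is the convention for "directed" — the lemma is false under the upward reading (trivially true and hence not equivalent to $\beta$-transitivity), so the downward reading is forced, and the cited reference \cite{Gi3} presumably uses this convention. I would mention this convention explicitly at the start of the proof to avoid ambiguity.
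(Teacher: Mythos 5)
Your proof is correct and follows essentially the same route as the paper's: the forward direction uses $g \relbeta t \relbeta h$ for $g,h \in \down{t}$ and transitivity of $\leq$ to keep the common lower bound inside $\down{t}$, and the reverse direction finds a common lower bound of the two witnesses inside the principal ideal of the middle element. Your explicit remark that ``directed'' must mean downward directed is a sensible clarification consistent with the paper's usage.
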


\begin{proof}
Suppose that $\beta$ is transitive, and that $g,h \in \down{t}$.  Then $g \relbeta t \relbeta h$ and so $g \relbeta h$,
and there exists $k \in G$ with $k \leq g$ and $k \leq h$: hence $k \in \down{t}$ and $\down{t}$ is a directed set.
Conversely, suppose that $g \relbeta t \relbeta h$: then there exist $k,l \in G$ with $k \leq g$, $k \leq t$, $l \leq t$ and $l \leq h$.
In particular, $k,l \in \down{t}$, and if $\down{t}$ is a directed set then there exists $c \leq t$ with $c \leq k$ and
$c \leq l$.  Then $c \leq g$ and $c \leq h$, and so $g \relbeta h$.
\end{proof}

\begin{definition}
An ordered groupoid in which every principal order ideal is a directed set will be called \textit{principally
directed}.  This terminology is consistent with that of \cite{Lw1}.
\end{definition}

It is clear that if $G$ is principally directed then so is its poset of identities $E(G)$.   However, the converse is false.  
Let $A$ and $B$ be groups with a common subgroup $C$ and let $i : C \emb A$ and $j: C \emb B$ be the inclusions. Consider the
semilattice $\{0,e,f,1\}$ with $e,f$ incomparable, and define a semilattice of groups $G$ by $G_1=C, G_e = A, G_f=B$ and
$E(G)= A \times B$ and with the obvious structure maps.  Then $ci \relbeta c \relbeta cj$ for all $c \in C$, but $ci$ and $cj$ are not
$\beta$--related.

\begin{prop}{\cite[Proposition 2.2]{Gi3}}
\label{quot_gpd}
If $G$ is a principally directed ordered groupoid then the quotient set $G/ \beta$ is a groupoid.
\end{prop}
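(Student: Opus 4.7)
The plan is to construct $G/\beta$ as a groupoid with object set $E(G)/\beta$. First I would check that source and target give well-defined maps $G/\beta\to E(G)/\beta$: if $k\le g$ and $k\le h$, then by OG1 and OG2 we have $k\dom\le g\dom, h\dom$ and similarly for $\ran$, so $g\dom\relbeta h\dom$ and $g\ran\relbeta h\ran$. I would also want the standard observation that any $k\le e\in E(G)$ is itself an identity (so $\beta$ on $E(G)$ really is given by common lower bounds that are identities); this follows from the uniqueness clause in OG3 applied to both $k$ and $k\dom$, which lie below $e$ and share the same domain.

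Next I would define the composition. If $[g\ran]=[h\dom]$ in $E(G)/\beta$, pick an identity $\ell$ with $\ell\le g\ran$ and $\ell\le h\dom$, form $(g|\ell)$ and $(\ell|h)$ via OG3/OG4, and set
\[
[g]\cdot[h] := [(g|\ell)(\ell|h)].
\]
The main obstacle is showing that this is independent of $\ell$ and of the chosen representatives $g_1\relbeta g_2$, $h_1\relbeta h_2$. For this I would exploit principal directedness (Lemma \ref{princ_dir}) as follows. Choose $a\le g_1,g_2$ and $b\le h_1,h_2$. Then $a\ran,\ell_1\in\down{g_1\ran}$ and $b\dom,\ell_1\in\down{h_1\dom}$, both directed, so after two applications of directedness I can find an identity $r$ with $r\le a\ran$, $r\le b\dom$, $r\le\ell_1$. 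Uniqueness of corestriction forces $(g_1|r)=(a|r)$ and uniqueness of restriction forces $(r|h_1)=(r|b)$, because each has the prescribed (co)range and lies below $g_1$ (resp.\ $h_1$). Using OG2 one gets $(a|r)(r|b)\le(g_1|\ell_1)(\ell_1|h_1)$, so these are $\beta$-related; by the symmetric construction with $g_2,h_2,\ell_2$, another diamond in $\down{a\ran}$ and $\down{b\dom}$ connects $(a|r)(r|b)$ to $(a|r')(r'|b)$ and hence to $(g_2|\ell_2)(\ell_2|h_2)$. Transitivity of $\beta$ then yields well-definedness.

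The remaining groupoid axioms should come almost for free. For associativity, given a triple $(g,h,k)$ with matching classes, I would unfold both bracketings by choosing identities $\ell_1\le g\ran,h\dom$ and $\ell_2\le h\ran,k\dom$, then descend (by directedness inside $\down{h\dom}$ and $\down{h\ran}$) to a single sufficiently small identity beneath everything; the two bracketings then have a common lower bound, forcing equality in $G/\beta$. The identity at the object $[e]$ is $[e]$ itself, and $[g]^{-1}:=[g^{-1}]$ works: both checks reduce to taking $\ell=g\dom$ or $\ell=g\ran$ in the definition of the product, which collapses $(g|\ell)(\ell|h)$ to $g$ or to $gg^{-1}=g\dom$. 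I expect the well-definedness step to be the real content, and the rest to be bookkeeping with restrictions and corestrictions together with Lemma \ref{princ_dir}.
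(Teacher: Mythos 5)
Your construction is exactly the one the paper uses: the paper defines the composition by $(g\beta)(h\beta)=[(g|f)(f|h)]\beta$ for a common lower bound $f$ of $g^{-1}g$ and $hh^{-1}$ and defers the verification to \cite{Gi3}, while you supply the well-definedness and axiom checks via directedness of principal order ideals and uniqueness of (co)restrictions. The details you give are correct, so this is essentially the same proof.
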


The groupoid structure on $G / \beta$ is inherited from $G$ in the following way.  If $g,h \in G$ and $g^{-1}g \relbeta hh^{-1}$
then there exists $f \in E(G)$ with $f \leq g^{-1}g$ and $f \leq hh^{-1}$, and the composition of the $\beta$--classes
of $g$ and $h$ is then defined by
\[ (g \beta)(h \beta)= [(g|f)(f|h)] \beta \,.\]
This is easily seen to be independent of any choices made for $f$ and for representatives of $g \beta$ and $h \beta$: see
\cite[section 2.2]{Gi3} for further details.  However, there is no natural ordering inherited by $G / \beta$, and so we regard
$G / \beta$ as trivially ordered.  Lawson \cite[Theorem 20]{Lw1} states Proposition \ref{quot_gpd} for the special case of 
\textit{principally inductive} ordered groupoids. 

\section{Expansion and colimits of modules}
\label{inf_and_colim}
Let $G$ be an ordered groupoid, and $\frakL(G)$ its associated left-cancellative category.  A $G$--module is defined to be an
$\frakL(G)$--module, that is, a functor $\A$ from $\frakL(G)$ to the category of abelian groups.  A $G$--module $\A$ is thus comprised
of a family of abelian groups $\{ A_e : e \in E(G) \}$ together with a group homomorphism $\alpha_{(e,g)}: A_e \ra A_{g^{-1}g}$
for each arrow $(e,g)$ of $\frakL(G)$.  We shall often denote $a \alpha_{(e,g)}$ by $a \lhd (e,g)$.  Morphisms of $G$--modules (called $G$--\textit{maps}) are natural transformations of functors, and so we obtain a category $\Mod{G}$ of $G$--modules and $G$--maps.

Suppose that $G$ is principally directed. No ordering is prescribed for the quotient groupoid $G / \beta$ and so $\frakL(G / \beta) = G / \beta$.  If $\B$ is a $(G / \beta)$--module
then we can {\em expand} $\B$ to obtain an $\frakL(G)$--module $\B^{\uparrow}_{\beta}$ with homomorphisms
$\mu_{(e,g)}$ as follows:
\begin{itemize}
\item for $e \in E(G)$ we have $(\B^{\uparrow}_{\beta})_e = B_{e \beta}$,
\item if $e \geq f$ then $e \beta = f \beta$ and $\mu_{(e,f)} = \id$,
\item for $x,y \in E(G)$ and for each $g \in G(x,y)$, the map $\mu_{(x,g)} : B_{x \beta} \ra B_{y \beta}$ is just the map
$\mu_{g \beta} : B_{x \beta} \ra B_{y \beta}$ determined by $\B$.
\end{itemize}
This defines the \textit{expansion functor} $\Mod{G / \beta} \ra \Mod{\frakL(G)}$ since, if $\xi : \B \ra \B'$ is a $G / \beta$--map then we have a
commutative diagram
\[ \xymatrixcolsep{3pc}
\xymatrix{
B_{e \beta} \ar[rr]^{\xi_{e \beta}} \ar@{=}[d] \ar@/_3pc/[dd]_{\lhd (e,g)} &&  B'_{e \beta} \ar@{=}[d] \\
B_{(gg^{-1})\beta}  \ar[rr]^{\xi_{e \beta}} \ar[d]^{\lhd g \beta} && B'_{(gg^{-1}\beta)}  \ar[d]^{\lhd g \beta} \\
B_{(g^{-1}g)\beta} \ar[rr]_{\xi_{(g^{-1}g)\beta}} && B'_{(g^{-1}g)\beta}
}
\]
and so we obtain an $\frakL(G)$--map $\xi^{\uparrow}_{\beta} : \B^{\uparrow}_{\beta} \ra (\B')^{\uparrow}_{\beta}$ with
$(\xi^{\uparrow}_{\beta})_e = \xi_{e \beta}$.  

\begin{lemma}
\label{infl_preserves_epis}
The expansion functor $\Mod{G / \beta} \ra \Mod{\frakL(G)}$ preserves epimorphisms.
\end{lemma}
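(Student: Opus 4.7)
The plan is to reduce the statement to a pointwise check. Both $\Mod{G/\beta}$ and $\Mod{\frakL(G)}$ are categories of functors valued in abelian groups, where epimorphisms and monomorphisms are computed objectwise: a natural transformation $\xi : \B \ra \B'$ is an epimorphism precisely when every component $\xi_c$ is a surjective homomorphism of abelian groups. So the first step is to record this standard fact, applied in both module categories.

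Next, I would unpack the definition of the expansion functor. By construction, for each $e \in E(G)$ we have
\[ (\B^{\uparrow}_\beta)_e = B_{e\beta}, \qquad ((\B')^{\uparrow}_\beta)_e = B'_{e\beta}, \]
and the component of the expanded morphism at $e$ is
\[ (\xi^{\uparrow}_\beta)_e = \xi_{e\beta} : B_{e\beta} \ra B'_{e\beta}. \]
Thus, computing the components of $\xi^{\uparrow}_\beta$ on $E(G)$ simply reindexes the components of $\xi$ by the $\beta$--classes.

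The conclusion follows immediately: suppose $\xi$ is an epimorphism in $\Mod{G/\beta}$. Then for every identity $e\beta$ of $G/\beta$ the component $\xi_{e\beta}$ is surjective. Since every identity of $\frakL(G)$ has the form $e$ for some $e \in E(G)$, and $(\xi^{\uparrow}_\beta)_e = \xi_{e\beta}$ is surjective, each component of $\xi^{\uparrow}_\beta$ is surjective, hence $\xi^{\uparrow}_\beta$ is an epimorphism in $\Mod{\frakL(G)}$.

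There is no real obstacle; the only step requiring a moment of care is being explicit that epimorphisms of functors into abelian groups are detected pointwise on objects (rather than on all morphisms), and that the object set of $\frakL(G)$ is $E(G)$, which is the preimage under $\beta$ of the object set of $G/\beta = \frakL(G/\beta)$.
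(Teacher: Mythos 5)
Your proposal is correct and follows exactly the paper's argument: the paper's (one-line) proof likewise observes that epimorphisms in these module categories are detected componentwise as surjections, and that the components of $\xi^{\uparrow}_{\beta}$ are just the components $\xi_{e\beta}$ of $\xi$ reindexed over $E(G)$. Your version merely spells out the pointwise-detection step and the reindexing more explicitly.
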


 \begin{proof}
Epimorphisms in $\Mod{}$ are given by families of surjections, and so if $\xi$ is an epimorphism in
$\Mod{G / \beta}$ then so is $\xi^{\uparrow}_{\beta}$ in $\Mod{\frakL(G)}$.
\end{proof}

The expansion functor is implicit in \cite{Log} for the case in which $\beta$ is replaced by the minimal group congruence $\sigma$ on an inverse semigroup.  We now generalise \cite[Lemma 3.4]{Log} and show that the expansion functor $\Mod{G / \beta} \ra \Mod{\frakL(G)}$ for a principally directed ordered groupoid $G$ admits a left adjoint.

Suppose that $\A$ is an $\frakL(G)$--module.  We consider the restriction of $\A$ to an $E(G)$--module,  involving the same
abelian groups $A_e, (e \in E(G))$ but using only the maps $\alpha_{(e,f)} : A_e \ra A_f$ from $\A$.  The colimit
$\colim^{E(G)} \A$ is then a direct sum
\[ \colim^{E(G)} \A = \bigoplus_{x \in E(G) / \beta} L_x \]
indexed by the $\beta$--classes in $E(G)$, and so determines an $E(G/ \beta)$--module $\L$ with $C_{e \beta} = L_{e \beta}$
and with trivial action, since $E(G / \beta)$ is a trivially ordered poset.   We shall allow ourselves a small abuse of notation,
and denote $\L$ by $\colim^{E(G)} \A$.

\begin{prop}
\label{colim_is_module}
If $G$ is principally directed and $\A$ is a $G$--module then $\colim^{E(G)} \A$ is a $G / \beta$--module.
\end{prop}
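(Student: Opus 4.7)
The plan is to promote the collection $\{L_{e\beta} : e \in E(G)\}$ to a functor from $\frakL(G/\beta)$ to the category of abelian groups.  Since $G/\beta$ is trivially ordered, $\frakL(G/\beta) = G/\beta$, and the action of identity arrows is already accounted for by the prescribed trivial $E(G/\beta)$--action, so it suffices to define, for each arrow $g\beta$ of $G/\beta$, a homomorphism $L_{(gg^{-1})\beta} \to L_{(g^{-1}g)\beta}$ and to verify the groupoid axioms.

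The definition of the action goes as follows.  A typical generator of $L_{(gg^{-1})\beta}$ has the form $[e,a]$ with $e \relbeta gg^{-1}$ and $a \in A_e$.  By principal directedness, choose $k \in E(G)$ with $k \leq e$ and $k \leq gg^{-1}$.  Then the restriction $(k|g)$ is defined, $(k,(k|g))$ is an arrow of $\frakL(G)$ with domain $k$ and range $(k|g)\ran \leq g^{-1}g$, and I set
\[ [e,a] \lhd g\beta \; := \; \bigl[(k|g)\ran,\; a \lhd (e,k) \lhd (k,(k|g))\bigr] \in L_{(g^{-1}g)\beta} \,. \]

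The main obstacle is well-definedness, which I would break into three checks.  For independence of $k$: given two choices $k_1,k_2$, principal directedness of $\down{gg^{-1}}$ supplies $k \leq k_1,k_2$, whence $k \leq e$ as well, and by OG3 uniqueness $(k|g) = (k \mid (k_i|g))$, so functoriality of $\alpha$ on $\frakL(G)$ forces both candidate outputs to equal the one computed with $k$.  For independence of the representative of $[e,a]$: if $e' \leq e$ with $k \leq e'$, the composite $(e,e')(e',k) = (e,k)$ in $\frakL(G)$ shows that $[e',\, a \lhd (e,e')]$ produces the same output.  For independence of $g$: if $g' \relbeta g$ pick $m \leq g,g'$, and shrink $k$ to satisfy $k \leq m m^{-1}$; then OG3 uniqueness gives $(k|g) = (k|m) = (k|g')$, so the two calculations coincide.

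Finally, functoriality.  The identity arrow $e\beta$ acts as the identity by taking $g = e$ and $k = e$.  For composable $g\beta$ and $h\beta$, choose $f \leq g^{-1}g, hh^{-1}$ so that $(g\beta)(h\beta) = [(g|f)(f|h)]\beta$ as in Proposition \ref{quot_gpd}.  Applying the definition twice and descending to a common refinement of the lower bounds chosen along the way — ensuring $k \leq e$, $k \leq gg^{-1}$, and $(k|g)\ran \leq f$ — the axiom OG2 and $\frakL(G)$--functoriality of $\alpha$ together identify the composite of the two actions on $[e,a]$ with the direct action of $[(g|f)(f|h)]\beta$.  The action on inverses is then automatic, completing the verification that $\colim^{E(G)}\A$ carries a $G/\beta$--module structure.
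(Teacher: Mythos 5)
Your proposal is correct and takes essentially the same route as the paper's proof: the same definition of the action of $g\beta$ via a common lower bound $k$ of $e$ and $gg^{-1}$, followed by the same three independence checks (choice of lower bound, choice of representative of the colimit element, choice of representative of $g\beta$), each reduced to compatibility with descent in the partial order using directedness of principal order ideals. The only difference is that you also sketch the functoriality of the resulting action, which the paper leaves implicit.
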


\begin{proof}
Let $\colim^{E(G)} \A = \bigoplus L_x$ as  above, let $\alpha_e : A_e \ra L_{e \beta}$ be the canonical map.
Suppose that  $\ol{a} \in L_{e \beta}$ with $\ol{a} = a \alpha_e$ for some in $A_e$, and $g \in G$ with $gg^{-1} \relbeta e$. Then $gg^{-1}$ and $e$ have a lower bound $\ell$, and we define
an action of $g \beta$ on $\ol{a}$ by
\begin{equation} 
\label{glamaction}
\ol{a} \lhd g \beta = (a \alpha_{(e,\ell)} \lhd (\ell|g))\alpha_z 
\end{equation}
where $z = (\ell | g)\ran$.  We have to check that this definition is independent of the choices made for $\ell, a$ and $g$.

If we choose a  different lower bound $\ell'$ of $gg^{-1}$ and $e$, then $\ell$ and $\ell'$ are $\beta$--related (using the 
transitivity of $\beta$) and so have a lower bound $\ell''$.
It is sufficient to show, for independence from the choice of $\ell$,  that the outcome of \eqref{glamaction} is unchanged 
by descent in the partial order, in the following sense.

Suppose that $a \in A_e$, $gg^{-1}=e$ and that $f \leq e$.  Let $y = g^{-1}g$ and $z =(f|g)\ran$.  Then \eqref{glamaction}
gives $a \alpha_e \lhd g \beta = (a \lhd g) \alpha_y$.  If we base the calculation at $f$ we obtain 
$(a \alpha_{(e,f)} \lhd (f|g))\alpha_z$.  But in $\frakL(G)$,
\[ (e,f)(f,(f|g)) = (e,(f|g)) = (e,(e|g))(y,z) \]
and so $a \alpha_{(e,f)} \lhd (f|g) = (a \lhd g) \alpha_{(y,z)}$.
Hence
\[ (a \alpha_{(e,f)} \lhd (f|g))\alpha_z = (a \lhd g) \alpha_{(y,z)} \alpha_z = (a \lhd g) \alpha_y \,. \]
Therefore the outcome of \eqref{glamaction} is independent of the choice of $\ell$.

We now consider the choice of a preimage for $\ol{a}$.  Suppose that $a \alpha_e = b \alpha_x$.  Then $e \beta x$ and so
$e$ and $x$ have a 
lower bound $u$ with $\ol{a} = a \alpha_{(e,u)} \alpha_u = b \alpha_{(x,u)} \alpha_u$.  So again it suffices to check what happens if we 
apply \eqref{glamaction} at $u$.  We have
\[ 
\ol{a} \lhd g \beta = (a \lhd g) \alpha_y 
= (a \alpha_{(e,u)}) \lhd(u|g)) \alpha_z \]
where now $z = (u|g)\ran$.  But as before, $a \alpha_{(e,u)} \lhd (u|g) = (a \lhd g) \alpha_{(y,z)}$ and $\alpha_{(y,z)} \alpha_z = \alpha_y$.
Hence the definition in \eqref{glamaction} is independent of the choice of $a$.

Finally, suppose that $g \relbeta h$.  Then $gg^{-1} \relbeta hh^{-1}$ and so $gg^{-1}$ and $hh^{-1}$ have a lower bound
$v \in E(G)$.  Then
$g \beta = (v|g) \beta = (v|h)\beta = h \beta$, and
acting with $(v|g)$ in \eqref{glamaction} we obtain
\begin{align*}
\ol{a} \lhd (v|g)\beta &= (a \alpha^e_v \lhd (v |g)) \alpha_z \\
&= (a \lhd g) \alpha_{(y,z)} \alpha_z \\
&= (a \lhd g) \alpha_y \,.
\end{align*}
Hence the definition in \eqref{glamaction} is independent of the choice of $g$, and we have a well-defined action of
$G / \beta$ on $\colim^{E(G)} \A$.
\end{proof}

Let $\B$ be a $G/ \beta$--module, let $\A$ be a $G$--module, and suppose
 that we are given a map $\phi : \A \ra \B^{\uparrow}_{\beta}$, with components
$\phi_e : A_e \ra B_{e \beta}, (e \in E(G))$.  Whenever $e \geq f$ we have a commutative triangle
\[ \xymatrixcolsep{3pc}
\xymatrix{
A_e \ar[rr]^{\alpha_{(e,f)}} \ar[dr]_{\phi_e } &&  A_f \ar[dl]^{\phi_f}\\
& B_{e \beta} & }
\]
(in which $B_{e \beta} = B_{f \beta}$) and so the $\phi_e$ induce a family of maps $\psi$ with
$\psi_{e \beta} : L_{e \beta} \ra \B_{e \beta}$
and, if $\alpha_e : A_e \ra \colim^{E(G)} \A$ is the canonical map, then $\phi_e = \alpha_e \psi_{e \beta}$.  Therefore $\psi$ determines $\phi$, and we have the following Corollary of  Proposition \ref{colim_is_module}.

\begin{cor}
If $G$ is principally directed then $\psi : \colim^{E(G)} \A \ra \B$ is a $G / \beta$--map, and 
$\phi \mapsto \psi$ is an injection
\begin{equation}
\label{adjoint1}
\rho:  \Mod{\frakL(G)}(\A,\B^{\uparrow}_{\beta}) \ra \Mod{G / \beta}(\colim^{E(G)} \A, \B) \,. \end{equation}
\end{cor}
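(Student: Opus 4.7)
The statement has two ingredients to verify: that $\psi = (\psi_{e\beta})$ is a morphism of $G/\beta$-modules, and that $\phi \mapsto \psi$ is injective.

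Injectivity is formal. The factorisation $\phi_e = \alpha_e \psi_{e\beta}$ established immediately before the statement recovers each component $\phi_e$ from $\psi$, so distinct $\phi, \phi'$ cannot induce the same $\psi$.

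The substantive task is to show that $\psi$ is natural for the $G/\beta$-action on $\colim^{E(G)} \A$ given in Proposition \ref{colim_is_module}. I would take a generator $\ol{a} = a\alpha_e$ of $L_{e\beta}$ and $g \in G$ with $gg^{-1} \relbeta e$, choose a lower bound $\ell$ of $gg^{-1}$ and $e$, and set $z = (\ell|g)\ran$. Since $z \leq g^{-1}g$ we have $\psi_{z\beta} = \psi_{(g^{-1}g)\beta}$. Applying $\psi$ to \eqref{glamaction} and using $\alpha_z \psi_{z\beta} = \phi_z$ gives
\[
(\ol{a} \lhd g\beta)\psi_{(g^{-1}g)\beta} = (a\alpha_{(e,\ell)} \lhd (\ell|g))\phi_z.
\]
On the other side, the commutative triangle preceding the statement yields $\phi_e = \alpha_{(e,\ell)}\phi_\ell$, hence $\ol{a}\psi_{e\beta} = a\alpha_{(e,\ell)}\phi_\ell$. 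Applying the $\frakL(G)$-naturality of $\phi$ at the arrow $(\ell, (\ell|g))$, together with the fact that $\B^{\uparrow}_{\beta}$ sends this arrow to $\mu_{g\beta}$, produces
\[
(\ol{a}\psi_{e\beta}) \lhd g\beta = (a\alpha_{(e,\ell)} \lhd (\ell|g))\phi_z,
\]
matching the previous expression.

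The only fiddly bookkeeping I anticipate is confirming that a general $\frakL(G)$-arrow $(e,g)$ with $g\dom \leq e$ acts on $\B^{\uparrow}_{\beta}$ as $\mu_{g\beta}$, which requires factoring $(e,g)$ as $(e,g\dom)(g\dom,g)$ and invoking the two bullets defining the expansion. Once that observation is in place, the verification reduces to a single application of the naturality of $\phi$.
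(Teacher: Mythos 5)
Your proposal is correct and follows the same route as the paper, which obtains $\psi$ from the universal property of $\colim^{E(G)}$ restricted along the commutative triangles for $e \geq f$ and deduces injectivity from the factorisation $\phi_e = \alpha_e\psi_{e\beta}$. The paper leaves the $G/\beta$--equivariance of $\psi$ as an unproved consequence of Proposition \ref{colim_is_module}; your explicit check via the naturality of $\phi$ at the arrow $(\ell,(\ell|g))$, together with the observation that the expansion sends $(e,g)$ to $\mu_{g\beta}$, supplies exactly that omitted detail and is sound.
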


\begin{theorem}
\label{infl_adjoint}
Let $G$ be a principally directed ordered groupoid.  Then the functor $\colim^{E(G)} : \Mod{G} \ra \Mod{G / \beta}$
is left adjoint to the expansion functor.
\end{theorem}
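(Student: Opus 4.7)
The plan is to promote the injection $\rho$ of the preceding Corollary to a natural bijection. Since $\rho$ is already well-defined and injective, what remains is (i) to exhibit an inverse, and (ii) to verify naturality in both variables. I expect (i) to be the main content and (ii) to be a routine diagram chase.

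For (i), given a $G/\beta$--map $\psi : \colim^{E(G)} \A \to \B$, the obvious candidate preimage is the family $\phi_e = \alpha_e \psi_{e\beta} : A_e \to B_{e\beta}$ for $e \in E(G)$. Compatibility of $\phi$ with the structure maps $\alpha_{(e,f)}$ coming from the restriction of $\A$ to an $E(G)$--module is automatic from the universal property of $\colim^{E(G)} \A$. The real content is to check that $\phi$ intertwines the full $\frakL(G)$--action on $\A$ with the expanded action on $\B^{\uparrow}_{\beta}$, that is, that
\[ \phi_e \mu_{(e,g)} \;=\; \alpha_{(e,g)} \phi_{g^{-1}g} \]
holds for every arrow $(e,g)$ of $\frakL(G)$.

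This compatibility is the main obstacle, and its resolution is exactly what the action formula \eqref{glamaction} of Proposition \ref{colim_is_module} was designed for. The key trick is that, since $g\dom = gg^{-1} \leq e$, we may take $\ell = gg^{-1}$ as a common lower bound of $e$ and $gg^{-1}$ in \eqref{glamaction}, so that the restriction $(\ell|g)$ is just $g$ itself; combined with the factorisation $(e,g) = (e, gg^{-1})(gg^{-1},g)$ inside $\frakL(G)$, this rewrites $(a\alpha_e) \lhd g\beta$ as $(a \lhd (e,g))\alpha_{g^{-1}g}$ for every $a \in A_e$. Applying $\psi_{(g^{-1}g)\beta}$ to both sides and using that $\psi$ is a $G/\beta$--map yields the required identity, so $\phi$ is an $\frakL(G)$--map with $\rho(\phi)=\psi$ by construction; hence $\rho$ is surjective.

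For (ii), a $G$--map $\zeta : \A' \to \A$ induces $\colim^{E(G)}(\zeta)$ by functoriality of the colimit, and a $G/\beta$--map $\eta : \B \to \B'$ induces the expansion $\eta^{\uparrow}_{\beta}$, which satisfies $(\eta^{\uparrow}_{\beta})_e = \eta_{e\beta}$. Commutativity of the two naturality squares for $\rho$ then reduces at each $e \in E(G)$ to the obvious identities involving the canonical maps $\alpha_e$ and the components $\zeta_e$, $\eta_{e\beta}$, and follows directly from naturality of $\alpha_e$ in $\A$.
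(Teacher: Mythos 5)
Your proposal is correct and follows essentially the same route as the paper: the inverse $\tau$ is defined by $\phi_e = \alpha_e\psi_{e\beta}$, and the crucial verification that this is an $\frakL(G)$--map is carried out, exactly as in the paper, by evaluating the action formula \eqref{glamaction} with $\ell = gg^{-1}$ so that $(a\alpha_e)\lhd g\beta = (a\lhd(e,g))\alpha_{g^{-1}g}$. The paper is in fact slightly terser than you are about naturality, so nothing is missing from your account.
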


\begin{proof}
We wish to construct a function
\begin{equation}
\label{adjoint2}
\tau:   \Mod{G / \beta}(\colim^{E(G)} \A , \B) \ra \Mod{G}(\A,\B^{\uparrow}_{\beta}) \,. \end{equation}
that will be inverse to $\rho$ in \eqref{adjoint1}.
For $e \in E(G)$ and $\psi: \colim^{E(G)} \A \ra \B$, consider the composition
\[ \xymatrixcolsep{3pc}
\xymatrix@1{
A_e \ar[r]_{\alpha_e} & L_{e \beta} \ar[r]_(.35){\psi_{e \beta}} & B_{e \beta} = (\B^{\uparrow}_{\beta})_e \,.} \]
This composition is a $G$--map since, for $a \in A_e$,
\begin{align*}
(a \alpha_e \psi) \mu_{g \beta} &= (a \alpha_e \lhd g \beta) \psi_{(g^{-1}g)\beta} \\
\intertext{and, evaluating the $g \beta$ action using \eqref{glamaction} with $\ell = gg^{-1}$,}
&= (a \lhd (e,g)) \alpha_{(g^{-1}g)\beta}\psi_{(g^{-1}g)\beta}
\end{align*}
and so the diagram
\[ \xymatrixcolsep{3pc}
\xymatrix{
A_e \ar[r]^{\alpha_e} \ar[d]_{\lhd (e,g)} & L_{e \beta} \ar[d]_{\lhd g \beta} \ar[r]^{\psi_{e \beta}} &  B_{e \beta}  \ar[d]^{\lhd g \beta} \\
A_{g^{-1}g} \ar[r]_{\alpha_{g^{-1}g}} & L_{(g^{-1}g) \beta} \ar[r]_{\psi_{(g^{-1}g) \beta}} & B_{(g^{-1}g) \beta} }
\]
commutes.  Now the injection $\rho$ in \eqref{adjoint1} carries $(\alpha_e \psi_{e \beta})$ to $\psi$ and so $\tau \rho$ is the identity.
A $G$--map $\phi : \A \ra \B^{\uparrow}_{\beta}$ is carried by $\rho$ to the induced map $\psi: \L \ra \B$,
where $\phi_e = \alpha_e \psi_{e \beta}$.  But $\tau$ carries $\psi$ precisely to this composition, and so
$\rho \sigma$ is aslo the identity, and so in the principally directed case, \eqref{adjoint1} and \eqref{adjoint2} exhibit a natural
bijection and its inverse.
\end{proof}

\subsection{Composition of colimits}
If $G$ is principally directed, then we have seen in Proposition \ref{colim_is_module} that, for every $G$--module $\A$,
the colimit $\L = \colim^{E(G)} \A$ can be considered as a $G / \beta$--module.  Since $G / \beta$ need not be connected,
$\colim^{G / \beta} \L$ decomposes in general into a direct sum 
$\colim^{G / \beta} \L = \bigoplus_{p \in \pi_0 (G / \beta)} C_p$ indexed by the connected components of $G / \beta$.  We can therefore form $\colim^{G / \beta} \L$, with canonical
maps $\psi_{e \beta} : L_{e \beta} \ra C_{e \beta}$, where $e \beta$ is the connected component of $e \in G$ in the quotient groupoid
$G / \beta$.

\begin{prop}
\label{comp_of_colim}
The colimit $\colim^{G / \beta} (\colim^{E(G)} \A)$ is naturally isomorphic to $\colim^{\frakL(G)} \A$.
\end{prop}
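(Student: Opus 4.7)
The plan is to show that $\colim^{\frakL(G)}$ and $\colim^{G/\beta} \circ \colim^{E(G)}$ are both left adjoint to the same functor $\mathbf{Ab} \ra \Mod{\frakL(G)}$, and then invoke the uniqueness of left adjoints. Recall that for any small category $\C$, the colimit functor $\colim^{\C} : \Mod{\C} \ra \mathbf{Ab}$ is left adjoint to the constant-diagram functor $\Delta_{\C} : \mathbf{Ab} \ra \Mod{\C}$ sending $M$ to the functor with value $M$ on every object and identity on every arrow. This gives $\colim^{\frakL(G)} \dashv \Delta_{\frakL(G)}$ directly, and also $\colim^{G/\beta} \dashv \Delta_{G/\beta}$ since $G/\beta$ is trivially ordered so that $\frakL(G/\beta) = G/\beta$.

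Composing the second of these adjunctions with $\colim^{E(G)} \dashv (-)^{\uparrow}_{\beta}$ from Theorem \ref{infl_adjoint}, one obtains that the composite $\colim^{G/\beta} \circ \colim^{E(G)} : \Mod{G} \ra \mathbf{Ab}$ is left adjoint to the composite $M \mapsto (\Delta_{G/\beta}(M))^{\uparrow}_{\beta}$. The crucial step is to verify that this composite right adjoint coincides with $\Delta_{\frakL(G)}$. Given $M$, the module $\Delta_{G/\beta}(M)$ assigns $M$ to every $\beta$-class and sends each $g\beta$ to $\id_M$. Its expansion then assigns $M$ to each $e \in E(G)$; the expansion rules send any restriction arrow $(e,f)$ with $f \leq e$ to the identity, and any arrow $(x,g)$ with $gg^{-1}=x$ to $\mu_{g\beta}=\id_M$. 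Since an arbitrary $\frakL(G)$-arrow $(e,g)$ factors as $(e, gg^{-1})(gg^{-1}, g)$, functoriality of the expansion then forces every transition map to be $\id_M$, so $(\Delta_{G/\beta}(M))^{\uparrow}_{\beta} = \Delta_{\frakL(G)}(M)$ naturally in $M$. Uniqueness of left adjoints then produces a natural isomorphism $\colim^{\frakL(G)} \cong \colim^{G/\beta} \circ \colim^{E(G)}$; evaluating at $\A$ gives the stated isomorphism.

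The main obstacle is the bookkeeping in the previous paragraph, namely confirming that the three-part formula for the expansion functor indeed assembles into a functor, and hence that the expansion of a constant $G/\beta$-module is a constant $\frakL(G)$-module. Should one wish to avoid the adjunction machinery, a direct verification of the universal property is also feasible: any compatible cone from the $A_e$ to an abelian group $M$ first factors through $\colim^{E(G)}\A$ because the $E(G)$-restrictions are a sub-collection of the $\frakL(G)$-arrows, and the induced maps on the summands $L_{e\beta}$ are then compatible with the $G/\beta$-action by virtue of the formula \eqref{glamaction}, so they factor further through $\colim^{G/\beta}(\colim^{E(G)}\A)$; the reverse direction of the universal property follows similarly.
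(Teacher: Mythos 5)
Your proposal is correct, but it proves the proposition by a genuinely different route from the paper. The paper verifies the universal property of $\colim^{\frakL(G)}\A$ directly: it takes an arbitrary compatible family $\mu_e : A_e \ra M$, first factors it through the summands $L_{e\beta}$ of $\colim^{E(G)}\A$ using only the restriction arrows $(e,f)$, and then carries out an explicit computation with \eqref{glamaction} to check that the induced maps $\delta_{e\beta}$ intertwine the $g\beta$--action, so that they factor once more through $\colim^{G/\beta}$. You instead compose the adjunction of Theorem \ref{infl_adjoint} with the standard adjunction $\colim^{G/\beta} \dashv \Delta_{G/\beta}$, identify the composite right adjoint $M \mapsto (\Delta_{G/\beta}(M))^{\uparrow}_{\beta}$ with $\Delta_{\frakL(G)}$, and invoke uniqueness of left adjoints. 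The identification is sound: since every arrow of $\frakL(G)$ factors as $(e,g)=(e,g\dom)(g\dom,g)$ and the expansion sends both types of generating arrow to $\id_M$, functoriality forces the expansion of a constant module to be constant. Your approach buys economy --- the $\beta$--action computation is done once, in the proof of Theorem \ref{infl_adjoint}, and never repeated --- at the cost of relying on that theorem (which the paper's proof of this proposition does not formally need) and of leaving the canonical cone maps $\alpha_e\psi_{e\beta}$ implicit; the paper's direct argument exhibits them explicitly, which makes the identification $H_0(G/\beta,\colim^{E(G)}\A)\cong H_0(G,\A)$ in Theorem \ref{homology_is_level} more transparent. Your closing sketch of the direct verification is in fact precisely the paper's argument, so either of your two suggested routes would serve.
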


\begin{proof}
We show that $\colim^{G / \beta} \L$ has the universal property required of $\colim^{\frakL(G)} \A$.  As above, 
we have $\alpha_e : A_e \ra L_{e \beta}$ and a commutative diagram
\[ \xymatrixcolsep{2pc}
\xymatrix{
A_e \ar[rr]^{\alpha_e} \ar[dd]_{\lhd (e,g)} &&  L_{e \beta} \ar[dd]_{\lhd g \lam} \ar@/^/[rrd]^{\psi_{e \beta}} && \\
&&&& \colim^{G / \beta} \L \\
A_{g^{-1}g} \ar[rr]_{\alpha_{g^{-1}g}} && L_{(g^{-1}g)\beta} \ar@/_/@<-1ex>[rru]_{\psi_{(g^{-1}g)\beta}} \\}
\]
from which we extract the commutative triangles
\[ \xymatrixcolsep{2pc}
\xymatrix{
A_e \ar[rrd]^{\alpha_e \psi_{e \beta}} \ar[dd]_{\lhd (e,g)} &&  \\
&& \colim^{G / \beta} \L \\
A_{g^{-1}g} \ar@<-1ex>[rru]_{\phantom{xxx}\alpha_{g^{-1}g} \psi_{(g^{-1}g)\beta}} && }
\]
Suppose we are given a family of maps $\mu_e : A_e \ra M$ to some abelian group $M$ making commutative triangles
\[ \xymatrixcolsep{2pc}
\xymatrix{
A_e \ar[rrd]^{\mu_e} \ar[dd]_{\lhd (e,g)} &&  \\
&& M \\
A_{g^{-1}g} \ar@<-1ex>[rru]_{\phantom{xxx}\mu_{g^{-1}g} } && }
\]
In particular, for $f \leq e$ we have 
\[ \xymatrixcolsep{2pc}
\xymatrix{
A_e \ar[rrd]^{\mu_e} \ar[dd]_{\alpha_{(e,f)}} &&  \\
&& M \\
A_f \ar@<-1ex>[rru]_{\phantom{xxx}\mu_f } && }
\]
and hence a unique family of maps $\delta_{e \beta}: L_{e \beta} \ra M$ making the diagrams
\[ \xymatrixcolsep{2pc}
\xymatrix{
A_e \ar[rrd]_{\alpha_e} \ar[dd]_{\alpha_{(e,f)}} \ar@/^/[rrrrd]^{\mu_e} &&&&  \\
&& L_{e \beta} \ar[rr]_{\delta_{e \beta}} && M \\
A_f \ar[rru]^{\alpha_f } \ar@/_/[rrrru]_{\mu_f} &&&& }
\]
commute.

Now consider the action of $g \beta$ on $\ol{a} = a \alpha_e \in L_{e \beta}$.  From \eqref{glamaction}
\begin{align*}
(\ol{a} \lhd g \beta) \delta_{(g^{-1}g)\beta} & = (a \alpha^e_{\ell} \lhd (\ell|g))\alpha_z \delta_{z \beta} \\
&= (a \alpha^e_{\ell} \lhd (\ell|g))\mu_z \\
&= a \mu_e \qquad
\text{(since $\mu_e = \alpha_{(e,(\ell,g))}\mu_{g^{-1}g}$)} \\
&= a \alpha_e \delta_{e \beta} \\ &= \ol{a} \delta_{e \beta}.
\end{align*}

Hence the triangles 
\[ \xymatrixcolsep{3pc}
\xymatrix{
L_{e \beta} \ar[rrd]^{\delta_{e \beta}} \ar[dd]_{\lhd g \beta} &&  \\
&& M \\
L_{z \beta} \ar@<-1ex>[rru]_{\phantom{xxx}\delta_{z \beta}} && }
\]
commute and induce a unique map $\delta: \colim^{G / \beta} \L \ra M$ making the diagram 
\[ \xymatrixcolsep{3pc}
\xymatrix{
A_e \ar[rr]^{\alpha_e} \ar[dd]_{\lhd (e,g)} &&  L_{e \beta} \ar[dd]_{\lhd g \lam} \ar@/^/[rrd]_{\psi_{[e]}} \ar@/^1pc/[rrrd]^{\delta_{[e]}} &&& \\
&&&& \colim^{G / \beta} \L \ar[r]^{\delta} & M\\
A_{g^{-1}g} \ar[rr]_{\alpha_{g^{-1}g}} && L_{(g^{-1}g)\beta} \ar@/_/[rru]^{\psi_{(g^{-1}g)\beta}\phantom{xx}} \ar@/_1pc/[rrru]_{\phantom{xxx} \delta_{(g^{-1}g)\beta}} \\}
\]
commute, since $L_{z \beta} = L_{(g^{-1}g)\beta}$.
\end{proof}

\section{The homology of principally directed ordered groupoids}
\label{homology_of_og}
The  functors $H_n(G,-), n \geq 0$, for a fixed ordered groupoid $G$ (or equivalently, for the left-cancellative category $\frakL(G)$), may be 
characterized as functors 
$\Mod{G} \ra  \mathbf{Ab}$ by the following properties:
\begin{enumerate}
\item[(a)] $H_n(G,-), n \geq 0$ is a homological extension of the colimit $\colim^{\frakL(G)}$, so that 
\begin{itemize}
\item $H_0(G,\A) = \colim^{\frakL(G)}(\A)$,
\item for any short exact sequence $\A \ra \B \ra \C$ of $G$--modules and for each $n \geq 0$, there exists a natural homomorphism
$d_n : H_{n+1}(G,\C) \ra H_n(G \A)$ inducing an exact sequence 
\[ \dots \ra H_{n+1}(G,\C) \ra H_n(G, \A) \ra H_n(G,\B) \ra H_n(G,\C)  \ra H_{n-1}(G, \A) \ra \dots \]
\end{itemize}
\item[(b)] $H_n(G,\P) = 0$ for all $n>0$ and all projective modules $\P$.
\end{enumerate}

\begin{theorem}
\label{homology_is_level}
For any principally directed ordered groupoid $G$ and $G$--module $\A$, and any $n \geq 0$, the homology groups $H_n(G,\A)$ and
$H_n(G / \beta,  \colim^{E(G)} \A)$ are isomorphic.
\end{theorem}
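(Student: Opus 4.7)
The natural strategy is to apply the axiomatic characterisation of $H_n(G,-)$ given by properties (a) and (b) above. Set
\[ T_n(\A) := H_n(G / \beta, \colim^{E(G)} \A) \]
for $n \geq 0$; it then suffices to verify that $\{T_n\}_{n \geq 0}$ satisfies (a) and (b) too, since the uniqueness of left derived functors of a right exact functor will produce a natural isomorphism $T_n \cong H_n(G,-)$. The identification $T_0(\A) \cong H_0(G,\A)$ is immediate from Proposition~\ref{comp_of_colim}, because $H_0(G/\beta, -) = \colim^{G/\beta}$ and $G/\beta$ is trivially ordered.

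The heart of the argument is to show that the functor $\colim^{E(G)} : \Mod{G} \ra \Mod{G/\beta}$ is \emph{exact}. The indexing category is the full subcategory of $\frakL(G)$ on the objects and arrows of $E(G)$: here $e \to f$ is present exactly when $f \leq e$, which forces $f \relbeta e$. Consequently this indexing category is the disjoint union of its full subcategories on the $\beta$-classes of $E(G)$, and the colimit splits as a direct sum $\bigoplus_x L_x$ in agreement with the description preceding Proposition~\ref{colim_is_module}. Principal directedness of $G$, together with the fact that $E(G)$ is a down-set, implies that any two identities in the same $\beta$-class admit a common lower bound lying in that same class; thus each $\beta$-class $x$ is a codirected poset, and under the arrow convention $e \to f \iff f \leq e$ it becomes a filtered category. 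Each $L_x$ is therefore a filtered colimit of abelian groups, which is exact, and direct sums in $\mathbf{Ab}$ are also exact, so $\colim^{E(G)}$ preserves short exact sequences. Feeding a short exact sequence $\A \ra \B \ra \C$ of $G$-modules through $\colim^{E(G)}$ and then into the long exact sequence for $H_*(G/\beta, -)$ supplies the required long exact sequence for $T_*$, verifying (a).

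For (b), let $\P$ be projective in $\Mod{G}$. Theorem~\ref{infl_adjoint} provides the adjunction $\colim^{E(G)} \dashv (-)^{\uparrow}_{\beta}$, and Lemma~\ref{infl_preserves_epis} says the right adjoint preserves epimorphisms. The standard consequence of these two facts is that $\colim^{E(G)}$ sends projectives to projectives: given an epimorphism $\B \sur \C$ of $G/\beta$-modules and a map $\colim^{E(G)} \P \ra \C$, its transpose $\P \ra \C^{\uparrow}_{\beta}$ lifts through the epimorphism $\B^{\uparrow}_{\beta} \sur \C^{\uparrow}_{\beta}$ by projectivity of $\P$, and transposing back produces the required lift. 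Hence $T_n(\P) = H_n(G/\beta, \colim^{E(G)} \P) = 0$ for $n > 0$, establishing (b).

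The main obstacle I anticipate is the exactness step: one has to check carefully that codirectedness of each $\beta$-class inside $E(G)$ really does translate into filteredness of the indexing category under the reversed arrow convention, and that the module-level direct sum decomposition of $\colim^{E(G)} \A$ used above genuinely matches the connected-component decomposition of the indexing category. Once that is secured, the remainder is essentially a bookkeeping assembly of Proposition~\ref{comp_of_colim}, Theorem~\ref{infl_adjoint} and Lemma~\ref{infl_preserves_epis} through the characterisation (a), (b).
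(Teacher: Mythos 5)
Your proposal is correct and follows essentially the same route as the paper's own proof: identify $T_0$ with $H_0(G,-)$ via Proposition \ref{comp_of_colim}, obtain the long exact sequence from exactness of $\colim^{E(G)}$ (which the paper also justifies by filteredness of the $\beta$-classes, citing the Stacks Project), and kill higher homology on projectives using the adjunction of Theorem \ref{infl_adjoint} together with Lemma \ref{infl_preserves_epis}. You simply spell out in more detail the two steps the paper leaves to references and standard facts.
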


\begin{proof}
We consider the functor $\Mod{\frakL(G)} \ra \mathbf{Ab}$ given by \[ \A \mapsto H_n(G / \beta, \colim^{E(G)} \A).\]
For $n=0$ we have
\[ H_0(G / \beta, \colim^{E(G)} \A) = \colim^{G / \beta} (\colim^{E(G)} \A) \cong \colim^{\frakL(G)} \A = H_0(G,\A) \]
by Proposition \ref{comp_of_colim}.  The transitivity of $\beta$ on $E(G)$ is sufficient to ensure that $\A \mapsto \colim^{E(G)} \A$ is exact, (see, for example, \cite[tag 04AX]{stacks}).  It follows that the sequence of functors $H_n(G / \beta, \colim^{E(G)} -)$ induces, from a short exact sequence  $\A \ra \B \ra \C$ of $G$--modules an exact sequence
\begin{align*} \dots  \ra H_{n+1}(G /\beta, & \colim^{E(G)} \C) \ra H_n(G / \beta ,  \colim^{E(G)} \A) \ra H_n(G / \beta,  \colim^{E(G)}\B) \\
& \ra H_n(G /\beta ,  \colim^{E(G)}\C)  \ra H_{n-1}(G /\beta,  \colim^{E(G)}  \A) \ra \dots \end{align*}
Now suppose that $\P$ is a projective $\frakL(G)$--module.  By Lemma
\ref{infl_preserves_epis} the expansion functor $\Mod{G / \beta} \ra \Mod{\frakL(G)}$ preserves epimorphisms, and so  its
left adjoint $\colim^{E(G)}$ preserves projectives.  Therefore $\colim^{E(G)} \P$ is projective, and for $n>0$
we have $H_n(G / \beta,\colim^{E(G)} \P)=0$.  
\end{proof}

\end{document}